\pdfoutput=1
\documentclass[11pt,reqno]{amsart}
\usepackage[T1]{fontenc}
\usepackage{amsmath,amssymb,amsthm,amscd,lmodern}
\usepackage[margin=1in]{geometry}
\usepackage{microtype}
\usepackage{needspace}
\usepackage[round,authoryear]{natbib}
\usepackage{hyperref}
\hypersetup{hidelinks,
 pdftitle={A natural Chow lemma for smooth threefolds and fourfolds},
 pdfauthor={Parsa Bakhtary}}
\theoremstyle{plain}
\newtheorem{theorem}{Theorem}[section]
\newtheorem{proposition}[theorem]{Proposition}
\newtheorem{lemma}[theorem]{Lemma}

\theoremstyle{definition}
\newtheorem{definition}[theorem]{Definition}
\theoremstyle{remark}
\newtheorem{remark}[theorem]{Remark}
\DeclareMathOperator{\Pic}{Pic}
\DeclareMathOperator{\Bl}{Bl}
\DeclareMathOperator{\Gal}{Gal}
\DeclareMathOperator{\Spec}{Spec}
\DeclareMathOperator{\Supp}{Supp}
\DeclareMathOperator{\length}{length}
\DeclareMathOperator{\Nef}{Nef}
\newcommand{\cO}{\mathcal O}
\newcommand{\cI}{\mathcal I}
\newcommand{\cJ}{\mathcal J}
\newcommand{\cK}{\mathcal K}
\newcommand{\cM}{\mathcal M}
\newcommand{\cG}{\mathcal G}
\newcommand{\PP}{\mathbb P}
\newcommand{\ZZ}{\mathbb Z}
\newcommand{\RR}{\mathbb R}
\title[A natural Chow lemma]
{A natural Chow lemma for\\
smooth threefolds and fourfolds}
\author[P.~Bakhtary]{Parsa Bakhtary}
\address{Independent, Sunnyvale, California, USA}
\email{pbakhtary@gmail.com}
\urladdr{https://orcid.org/0000-0002-4491-5216}
\date{}
\subjclass[2020]{Primary 14E05; Secondary 14A10, 14C20}
\keywords{Chow lemma, projectivization, quasi-projective opens,
principalization, numerical equivalence}

\begin{document}
\begin{abstract}
Let $X$ be a smooth proper integral variety of dimension at most four
over a field of characteristic zero. We construct a finite sequence of
blowups along smooth centers with projective endpoint, preserving the
intersection of all maximal quasi-projective opens of $X$.
The construction depends only on a fixed functorial principalization
procedure and is natural under isomorphisms, including those over
different ground fields. The proof has two parts. A colength argument
on regular local surfaces gives,
in every dimension, a finite preparation after which some maximal
quasi-projective open has complement mapping into codimension at least
three in $X$. In dimensions at most four these images are points or
curves. Ample numerical classes on the normalized components of that
complement give finite bounds for summing base ideals of linear systems.
The resulting ideal descends to the original field; geometric
integrality is not required.
\end{abstract}
\maketitle

\section{Introduction}

Chow's lemma and principalization give a projective smooth modification
of a smooth proper variety by blowups along smooth centers. They also
allow one to preserve a prescribed quasi-projective open. Can the
sequence and a preserved open be specified from the variety itself,
compatibly with isomorphisms?

\citet[\S3.3]{Rydh} points out that the choices of a quasi-projective
open and its projective compactification prevent his Chow construction
from being functorial, and leaves open how far functoriality is possible.
We obtain naturality under isomorphisms in dimension at most four.
This is weaker than compatibility with the smooth base changes appearing
in Rydh's other results. The open we preserve is the intersection of
all maximal quasi-projective opens. Nonprojective smooth proper
threefolds already occur in Hironaka's construction
\citep[Tag~08J0]{Stacks}.

\subsection{Conventions and the main result}
Throughout, $k$ is a field of characteristic zero. A $k$-variety is an
integral separated scheme of finite type over $k$; it need not be
geometrically integral. Projectivity and quasi-projectivity are over
the indicated ground field. Centers may be disconnected. We write
$\Bl_{\cI}Y$ for the blowup of a coherent ideal $\cI$ on $Y$, formed
from its Rees algebra. The \emph{unit locus} of $\cI$ is the open where
$\cI=\cO_Y$; its \emph{invertibility locus} is the open where it is an
invertible sheaf. The \emph{boundary} of an open $W\subset Y$ is the
reduced complement, and its components are the irreducible components
of that complement.

For a smooth proper $k$-variety $X$, let $\cM(X)$ be the set of its
maximal opens that are quasi-projective over $k$. Its finiteness over
an algebraically closed field was established by
\citet[Theorem~A]{Wlodarczyk1999}; for the arbitrary-field statement we
use \citet[Theorem~9]{Benoist}. It is nonempty by Noetherianness. Put
\[
 U(X)=\bigcap_{W\in\cM(X)}W.
\]
We call this dense quasi-projective open the \emph{quasi-projective
core}. Every isomorphism of $X$ preserves it. It is a common open
contained in all maximal quasi-projective opens; we make no maximality
claim for it among opens that a natural modification might preserve.

An isomorphism between varieties over possibly different fields is
\emph{semilinear} if it lies over an isomorphism of their ground fields.
A rule assigning ordered blowup sequences is \emph{natural under
isomorphisms} if every such isomorphism of inputs lifts to isomorphisms
of the ordered stages, commuting with the projections and carrying
each center onto its counterpart. All our blowups are isomorphisms
on a dense open. The lifts are therefore unique, and respect
identities and composition.

Fix the characteristic-zero principalization procedure of
\citet[Theorems~1.0.1 and 2.4.2, \S5.3]{Wlodarczyk}.
Principalization makes the total transform of a nonzero ideal
invertible by a finite sequence of smooth blowups. We use the
procedure's compatibility with semilinear isomorphisms and the fact
that its centers avoid the unit locus. Throughout, we omit empty
centers and Cartier components of smooth centers; these contribute
only identity blowups.

\Needspace{16\baselineskip}
\begin{theorem}\label{thm:main}
There is a rule, depending only on the fixed principalization
procedure, assigning to every characteristic-zero field $k$ and every
smooth proper $k$-variety $X$ of dimension at most four a finite
ordered sequence
\[
 X=X_0 \longleftarrow X_1\longleftarrow\cdots\longleftarrow X_r
\]
with the following properties:
\begin{enumerate}
\item Each arrow is the blowup of a smooth center of codimension at
least two, and $X_r$ is smooth and projective over $k$.
\item Every composite $X_i\to X$ is an isomorphism over $U(X)$.
\item The rule is natural under isomorphisms, including semilinear ones.
\item If $X$ is projective, the sequence is empty.
\end{enumerate}
\end{theorem}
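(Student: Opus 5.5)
The construction produces a single nonzero coherent ideal $\cJ$ on $X$ whose unit locus contains $U(X)$ and whose blowup is projective. We then apply the fixed principalization procedure to $\cJ$ and take the resulting sequence. Naturality follows once $\cJ$ is defined intrinsically from $X$, for instance as a sum over all $W\in\cM(X)$ or over all line bundles in a canonically bounded set. The reason is that principalization commutes with semilinear isomorphisms and its centers avoid the unit locus, which gives (2) and (3). For (4), if $X$ is projective then $\cM(X)=\{X\}$, and we arrange $\cJ=\cO_X$ in that case, so the sequence is empty.

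The sequence of steps is as follows.

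\textbf{Step 1: the boundary of a quasi-projective open.} Fix $W\in\cM(X)$ and let $Z=X\setminus W$. Since $X$ is smooth and $W$ is quasi-projective, some ample divisor on $W$ extends to a Weil, hence Cartier, divisor $D$ on $X$. The base ideals of the linear systems $|mD|$, restricted suitably, define a rational map that is an embedding on $W$.

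\textbf{Step 2: preparation.} The abstract describes a canonical preliminary sequence of blowups after which some maximal quasi-projective open has boundary whose image in $X$ has codimension at least three. For this I would use a colength argument on the local rings at codimension-two points, which are regular local surfaces. If the boundary contains a codimension-two component, the base ideal of $|mD|$ at its generic point has finite colength. A canonically chosen principalization of such ideals strictly decreases a colength invariant. Therefore after finitely many canonical steps, performed for all $W$ simultaneously so that the construction stays natural, no boundary component of codimension two remains.

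\textbf{Step 3: the finiteness bound.} Because $\dim X\le4$, the images of the boundaries are now finitely many points and curves. On the normalization $\tilde C$ of each boundary curve, we consider the classes of $D$ restricted to $\tilde C$. Ampleness on $W$ and numerical positivity on these normalized components give a uniform $m_0$ with the following property: the sum $\cJ=\sum_{W}\sum_{m\le m_0}\mathfrak b(|mD_W|)$ over a finite canonical set of classes is independent of choices. The blowup of this ideal resolves the maps to projective space, so it is projective. The set of classes is canonical, for example all ample numerical classes of degree bounded on the normalized components. Since it is defined over $k$ by Galois invariance, it descends to the ground field even when $X$ is not geometrically integral.

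\textbf{Step 4: conclusion.} Principalizing $\cJ$ yields $X_r$ that dominates $\Bl_{\cJ}X$, so $X_r$ is projective. It is also smooth, and it is an isomorphism over the unit locus, which contains $U(X)$.

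I expect the main obstacle to be Step 3: making the sum over linear systems finite and canonical. Infinitely many ample classes exist, so I would first try to bound everything by a numerical degree on the finitely many normalized boundary curves, using that $\Nef$ restricted to a curve is one-dimensional. I would then show that base ideals stabilize beyond that bound, so that the finite sum is determined by $X$ alone.
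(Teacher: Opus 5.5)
\paragraph{Step~3 has a gap.} It fails exactly where you expected, and for a concrete reason.

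A sum of base ideals does not inherit projectivity from its summands, because $\Bl_{\cI_1+\cI_2}$ need not dominate $\Bl_{\cI_1}$. On $\mathbb A^2$, $(x^2,y)+(x,y^2)=(x,y)$, and on the point blowup the chart $y=xt$ gives $(x^2,xt)=x(x,t)$, which is still not invertible. So ``the blowup of this ideal resolves the maps to projective space'' is false.

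Summing over different $W$ is outright fatal. If $\cI_W,\cI_{W'}$ are unit on distinct maximal quasi-projective opens $W,W'$, then $\cI_W+\cI_{W'}$ is unit on $W\cup W'$, and its blowup is an isomorphism there. Projectivity of that blowup would make $W\cup W'$ quasi-projective, contradicting maximality of $W$.

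The paper takes a \emph{product} over opens and over classes. It sums only inside a single boundary numerical class, where its key Lemma~\ref{lem:sum} applies. That lemma says: admissible systems over $W$ whose line bundles have equal degrees on every integral curve in the boundary $B$ have a sum whose blowup $T=\Bl_{\sum\cI_i}Y$ is projective. The proof shows $\pi^*L_i-D$ ample by Nakai--Moishezon. It uses that the residual ideals $\cI_i\cO_T(D)$ have unit loci covering $T$, together with projective comparison varieties $Q_i$. The degree agreement on $B$ is what transfers positivity between indices over the boundary. This lemma is the missing idea. It also shows what you actually need: a canonical \emph{finite} set of boundary classes. You do not need a stabilization of base ideals, which you have no route to proving.

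\paragraph{Producing that finite set, and naturality of Step~2.} Bounding classes canonically requires an intrinsic ample class on each normalized boundary component $Z_j$. These components live in the prepared $Y$, not in $X$. In dimension four they can be surfaces lying over a point or a curve of $X$. So the one-dimensionality of the nef cone of a curve is irrelevant, and ``bounded degree'' needs a polarization you have not produced.

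The paper builds $h_j$ in two pieces:
\begin{itemize}
\item the lexicographically least $f$-ample combination $-\sum a_iE_i$ of the ordered exceptional divisors;
\item plus the least multiple of the pulled-back degree-one class of the normalized image curve. This is exactly where $\dim X\le4$ enters.
\end{itemize}
This yields finite, nested, exhausting sets $K_B(N)$, and one takes the least $N_W$ at which the class of an admissible system occurs.

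Step~2 has a naturality problem. The base ideals of $|mD|$ depend on the choice of $D$. Moreover, if you could choose such an admissible ideal canonically, you could principalize it directly and skip preparation altogether. The paper instead principalizes the canonical ideal $\prod_{W\in\cG_i}\cI_{(Y_i\setminus W)_{\mathrm{red}}}$. It uses an admissible base ideal only as an auxiliary measure, whose residual colength at the codimension-two points of $X$ strictly drops in each nonterminal round. What ends up in codimension at least three is the image of the boundary in $X$, not the boundary in $Y$.
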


In particular, every $k$-automorphism of $X$ lifts uniquely to every
stage. We assert naturality for individual isomorphisms, with no claim
about families or group-scheme actions. The rule is non-effective:
it uses least integers defined by existence and ampleness conditions.
Compatibility with recomputing the rule after a field extension is
not asserted.

\begingroup\emergencystretch=1em
Smooth-morphism functoriality would require additional compatibility:
neither the quasi-projective core nor the least-integer selections used
below come with a pullback rule, pulled-back admissible systems need not
remain admissible, and positive relative dimension can turn curve
boundary images into surfaces. Once principalization is fixed, the
construction is canonical in the precise sense that every isomorphism
determines a unique compatible isomorphism of the entire ordered tower.
\par\endgroup

\subsection{Background and known results}
For the open-preserving Chow lemma we use \citet[Theorem~3.5]{Rydh};
his Theorem~3.1(ii) refines a supplied modification to a sequence
with smooth centers. These results do not themselves specify the
initial projective modification naturally from $X$.

For comparison over $\mathbb C$, Fujiki's class $\mathcal C$ consists
of reduced compact complex spaces bimeromorphic to compact K\"ahler
manifolds. \citet[Theorem~1.1]{Kim} states a K\"ahler-model theorem
equivariant under the full automorphism group for spaces in this class.
A smooth proper complex algebraic variety is Moishezon and hence
belongs to this class. Applied to it, that conclusion would give
a K\"ahler manifold bimeromorphic to a projective variety,
hence a projective manifold by Moishezon's theorem. GAGA algebraizes
the modification and the lifted automorphisms, and Rydh's
Theorem~3.1(ii) gives an equivariant smooth blowup refinement.
This comparison uses Kim's stated conclusion; our proof does not
use his argument. The additional conclusions here are preservation
of $U(X)$, semilinear naturality, and a construction over every
characteristic-zero field. The algebraic construction applies even when
no embedding of the ground field into $\mathbb C$ is available.
For a fixed field $k$, naturality under $k$-isomorphisms alone can be
obtained from equivariant existence by choosing representatives of
isomorphism classes.

\citet[Theorem~1.1 and Remark~1.2]{JiaMeng} construct a K\"ahler
model by smooth blowups equivariant under the stabilizer of a
specified big $(1,1)$-class. Their proof sums translates of an ideal
and uses the unit loci of the residual ideals
\citep[\S3, Claim~3.3]{JiaMeng}. We adapt this method algebraically:
the line bundles of our systems need only agree numerically on
curves in the boundary. The resulting summation lemma is proved
in Section~\ref{sec:systems}.

The construction has two stages. In
Section~\ref{sec:preparation}, we repeatedly principalize the product
of the ideals of the complements of all maximal quasi-projective
opens containing the pulled-back core. The product avoids choosing
one such open.
Termination is proved using an auxiliary ideal with projective blowup,
chosen only for the proof. At the generic points of its codimension-two
support, the sequence becomes a sequence of point blowups on regular
surfaces. Each nonterminal iteration lowers the sum of the local
colengths after divisorial factors are removed.
The remaining boundary images then have codimension at least three
in the original $X$.

When $\dim X\le4$, these images are points or curves. In
Section~\ref{sec:completion}, the exceptional divisors and the
degree-one numerical class of a smooth projective curve give ample
numerical classes on the normalizations of the boundary components.
These provide finite bounds for the classes of line bundles restricted to the boundary.
Within one class, the summation lemma allows us to sum the base ideals
of systems giving projective embeddings of their blowups
(Definition~\ref{def:admissible}), while retaining a projective blowup.
Taking the product
over the finitely many selected classes and opens removes both
choices. This ideal descends to $k$; its principalization gives
the projective endpoint.

\section{Admissible systems and sums of base ideals}\label{sec:systems}

\subsection{The open-preserving Chow lemma}
Let $Y$ be a smooth proper $k$-variety and let $W\subset Y$ be a
dense quasi-projective open. A \emph{linear system} on $Y$ is a
pair $(L,H)$ consisting of a line bundle $L$ and a nonzero finite-dimensional subspace
$H\subset H^0(Y,L)$. Its \emph{base ideal} is
\[
 \cI(L,H)=\operatorname{im}\bigl(H\otimes_k L^{-1}\longrightarrow\cO_Y\bigr).
\]
The associated rational map to $\PP(H^\vee)$, the projective space of
one-dimensional subspaces of the dual of $H$, extends to the blowup
$q:P=\Bl_{\cI(L,H)}Y\to Y$. Its associated line bundle on $P$ is
$q^*L\otimes\cI(L,H)\cO_P$.

\begin{definition}\label{def:admissible}
The linear system $(L,H)$ is \emph{admissible over $W$} if its base ideal is
the unit ideal on $W$ and the induced morphism
$P\to\PP(H^\vee)$ is a closed immersion.
\end{definition}

Rydh calls a blowup $W$-admissible when its center avoids $W$;
our definition also requires the indicated projective embedding.
Thus the ordinary blowup of an admissible base ideal is projective,
and its associated line bundle is very ample. We also use this
definition for smooth proper schemes with finitely many disjoint
integral components, requiring the base ideal to be nonzero on
each component.

\begin{lemma}\label{lem:existence}
An admissible system exists over every dense quasi-projective open
of a smooth proper variety. The same holds for a finite disjoint
union of such varieties, with an open dense in every component.
\end{lemma}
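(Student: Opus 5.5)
We construct the system from an explicit projective compactification of $W$ and then clear denominators. Since $W$ is quasi-projective over $k$, choose an immersion $W\hookrightarrow\PP^N_k$ and let $\overline W$ be the scheme-theoretic closure, a projective $k$-variety containing $W$ as a dense open. The graph closure $\Gamma\subset Y\times_k\overline W$ of the identity of $W$ is a projective modification of $Y$ that is an isomorphism over $W$. It is projective over $Y$ because $\overline W$ is projective. By the standard fact that a projective birational morphism to a quasi-projective target (or, more generally, a projective morphism over a Noetherian base) is a blowup, we may write $\Gamma=\Bl_{\cJ}Y$ for some coherent ideal $\cJ$. This fact is in \citet[Tag~080B]{Stacks} (the projective case, via relative $\mathcal{O}(1)$ and a twist). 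One then arranges that $\cJ$ is the unit ideal on $W$: $\Gamma\to Y$ is an isomorphism over $W$, so one can replace $\cJ$ by a suitable product with an ideal cosupported on $Y\setminus W$. The cleanest route is Rydh's open-preserving Chow lemma, \citet[Theorem~3.5]{Rydh}, which directly yields a $W$-admissible blowup $\Bl_{\cJ}Y$ that is projective over $k$.

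Next we convert $\cJ$ into a base ideal. Pick an ample line bundle $A$ on $Y$. If $Y$ itself is not projective, no such $A$ exists, so this naive approach fails, and we proceed differently. Let $P=\Bl_{\cJ}Y$, which is projective over $k$. Choose an ample line bundle $M$ on $P$. Note that $\cJ\cO_P=\cO_P(1)$, where $\cO_P(1)$ is $q$-ample. To realize the embedding as coming from a linear system on $Y$, we need $M$ of the form $q^*L\otimes(\cJ\cO_P)^m$. Since $Y$ is smooth, $\Pic Y\to\Pic P$ together with the exceptional part accounts for all of $\Pic P$: $\Pic P$ is generated by $q^*\Pic Y$ and Cartier divisors supported over $Y\setminus W$. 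So the target is to show that some very ample $M$ has the form $q^*L\otimes\cO_P(-E)$ with $E$ effective, exceptional, and $\cO_P(-E)=\cK\cO_P$ for an ideal $\cK$ trivial on $W$. Then $H^0(P,M)\subset H^0(Y,L)$, because $q_*\cO_P(-E)\subset\cO_Y$ and $Y$ is normal.

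Concretely, we write $M=q^*L\otimes\cO_P(-E)$ with $E$ effective. Replacing $M$ by $M^{n}\otimes\cO_P(1)^{m}$ keeps it ample and adds $m\cdot(\text{exceptional divisor of }\cJ)$ to $E$. Since any Cartier divisor supported over $Y\setminus W$ can be made effective by adding a large multiple of the exceptional divisor of $\cJ$, we may assume $E$ is effective. Let $\cK=q_*\cO_P(-E)$, an ideal on $Y$ equal to $\cO_Y$ on $W$. After replacing $M$ by a high power, $M$ is very ample and globally generated with $H^1$ vanishing. Take $H=H^0(P,M)\subset H^0(Y,L)$. We must then check that the base ideal $\cI(L,H)$ satisfies $\cI(L,H)\cO_P=\cO_P(-E)$, which follows from global generation of $M$, and that $\Bl_{\cI(L,H)}Y\to\PP(H^\vee)$ is a closed immersion. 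This last step is the main obstacle: the blowup of $\cI(L,H)$ need not equal $P$. We handle it by taking a further high power, so that $\cI(L,H)$ is, up to integral closure, a power of $\cK$, and by choosing $E$ so that $P=\Bl_{\cK^n}Y$ (Rees algebra truncation). Then the blowup is $P$, and the map to $\PP(H^\vee)$ is the complete embedding by $M$. Base-ideal triviality on $W$ holds since $E$ lies over $Y\setminus W$.

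For a finite disjoint union $Y=\coprod Y_j$, we apply the construction on each component with the same exponent (taking common multiples). We then set $L=(L_j)$ and $H=\bigoplus H_j$. This embeds $P=\coprod P_j$ via the Segre/join-type map, or alternatively embeds each $P_j$ into disjoint linear subspaces of $\PP(H^\vee)$. The base ideal is then nonzero on each component.
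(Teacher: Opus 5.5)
Your proof has a genuine gap at the step you yourself flag as the main obstacle. The remedy you propose is wrong as stated. Ideals with the same integral closure have blowups with the same normalization, not isomorphic blowups. At a smooth point of a surface, $(x^2,y^2)$ and $(x,y)^2$ have the same integral closure, yet the first blowup is non-normal and the second is smooth. You also give no argument that $E$ can be chosen with $P=\Bl_{\cK^n}Y$.

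The missing idea is that no comparison with $\cK$ is needed. Take $M$ very ample and $H=H^0(P,M)$, so that $\cI(L,H)\cO_P=\cO_P(-E)$ is invertible. The universal property then gives $g\colon P\to\Bl_{\cI(L,H)}Y$. Its composite with the map to $\PP(H^\vee)$ is the embedding $\phi_M$, so $g$ is a closed immersion. It is also proper with dense image in the integral scheme $\Bl_{\cI(L,H)}Y$, hence an isomorphism. Equivalently, as in the paper, $(q,\phi_M)$ embeds $P$ in $Y\times\PP(H^\vee)$ onto the closure of the graph over $W$, and that closure is the base-ideal blowup.

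Your effectivity step also fails as written. Write $\cJ\cO_P=\cO_P(-E_0)$. Replacing $M$ by $M^n\otimes\cO_P(1)^m$ replaces $E$ by $nE+mE_0$. Cancelling negative coefficients therefore needs $m/n$ bounded below. But ampleness is only guaranteed for $m/n$ small, because $\cO_P(1)$ is merely $q$-ample. Moreover, suppose $Y\setminus W$ has a divisorial component along which $\cJ$ is a unit. Then a negative coefficient along its strict transform cannot be cancelled by $E_0$ at all.

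The repair is to choose $L$ correctly rather than twist $M$. Take $L=(q_*M)^{\vee\vee}$, which is invertible since $Y$ is regular. Then $H^0(P,M)\subset H^0(Y,L)$. If $M$ is globally generated, $E$ is automatically effective and trivial over the isomorphism locus of $q$. Indeed, near any point $E$ equals the divisor of $q^*\tilde s$, where $\tilde s\in H^0(Y,L)$ is the image of a section $s$ of $M$ not vanishing at that point.

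With both repairs, your argument becomes the paper's proof. Rydh's lemma gives $P$. One takes $A$ very ample on $P$, sets $L=(\rho_*A)^{\vee\vee}$ and $H=H^0(P,A)$, and concludes by the graph argument. The description of $\Pic P$, the ideal $\cK$ and the Rees truncation can all be dropped. Your treatment of disjoint unions via disjoint linear subspaces of $\PP(H^\vee)$ is fine.
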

\begin{proof}
The open-preserving Chow lemma gives a projective modification
$\rho:P\to Y$ that is an isomorphism over $W$
\citep[Theorem~3.5]{Rydh}. Take $P$ integral by retaining
the reduced closure of $W$.
Choose a very ample line bundle $A$ on $P$ and set
$L=(\rho_*A)^{\vee\vee}$. The pushforward is torsion-free of
rank one, and regularity of $Y$ makes its reflexive hull an
invertible sheaf. Its global sections contain
$H=H^0(P,A)$. Over $W$ these sections generate $L$.

The graph of the resulting rational map is $P$: the morphism
$(\rho,\iota):P\to Y\times\PP(H^\vee)$ is a closed immersion,
where $\iota$ is the embedding given by $A$, and its image is the
closure of the graph over $W$. The graph of a linear system is
its ordinary base-ideal blowup. This proves admissibility.
For a finite disjoint union, choose a very ample bundle on the
projective disjoint union and use the same argument.
\end{proof}

\begin{lemma}\label{lem:boundary}
Every maximal quasi-projective open of a smooth proper $k$-variety $Y$
has complement of codimension at least two. Consequently $U(Y)$
contains every codimension-one point of $Y$.
\end{lemma}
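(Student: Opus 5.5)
The plan is to show that a quasi-projective open $W\subset Y$ whose complement has a divisorial component can be enlarged, which contradicts maximality. Suppose $W\in\cM(Y)$ and let $D$ be an irreducible component of $Y\setminus W$ of codimension one, with generic point $\eta$. Since $Y$ is smooth, $\cO_{Y,\eta}$ is a discrete valuation ring. The aim is to find an open $W'\supsetneq W$ containing $\eta$ that is still quasi-projective; this contradicts maximality. The second assertion then follows at once. Each $W\in\cM(Y)$ contains every codimension-one point, because such a point either lies in $W$ or is the generic point of a divisorial component of the complement. Hence $U(Y)$, being a finite intersection (by the finiteness result quoted above), contains them all. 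In fact finiteness is not even needed for this step: an arbitrary intersection of sets each containing the point still contains it.

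To produce $W'$, I would use Lemma~\ref{lem:existence}. Choose a system $(L,H)$ admissible over $W$, so that $q:P=\Bl_{\cI(L,H)}Y\to Y$ is projective and an isomorphism over $W$. The key observation is that the base ideal $\cI=\cI(L,H)$ becomes principal at $\eta$, since $\cO_{Y,\eta}$ is a DVR. So $\cI_\eta=\mathfrak m_\eta^{n}$ for some $n$. Replace $L$ by $L'=L(-nD)$ and view $H$ as a subspace of $H^0(Y,L')$. This works because every section in $H$ vanishes to order at least $n$ along $D$, so $H\subset H^0(Y,L(-nD))$. The new base ideal is $\cI'=\cI\cdot\cO_Y(nD)$. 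It is the unit ideal on $W$, and also at $\eta$. Twisting by an invertible ideal does not change the blowup, so $\Bl_{\cI'}Y=P$ with the same morphism to $\PP(H^\vee)$, which is still a closed immersion. Thus $(L',H)$ is admissible over the open where $\cI'=\cO_Y$. Call this open $W'$. It contains $W\cup\{\eta\}$. Over $W'$ the map $q$ is an isomorphism, so $W'\cong q^{-1}(W')$. This is an open subscheme of the projective scheme $P$, hence quasi-projective over $k$.

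The point needing care is making sure the twist is legitimate: $D$ is Cartier because $Y$ is regular, and dividing by $\cO_Y(-nD)$ preserves the image description of the base ideal. There is also an alternative to the twisting argument. One can argue directly that $q$ is an isomorphism over a neighborhood of $\eta$, because the blowup of an ideal that is invertible near $\eta$ is an isomorphism there. Then $W'$ can be taken to be the invertibility locus of $\cI$, which is open and contains $W\cup\{\eta\}$; it is quasi-projective as an open subscheme of $P$. I would present this simpler version and keep the twist only as motivation. The remaining obstacle is minor: separatedness and finite type are automatic, and $W'\ne W$ because it contains $\eta\notin W$.
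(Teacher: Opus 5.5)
Your proposal is correct, and the simpler version you say you would present is essentially the paper's proof. Take an admissible base ideal over $W$ from Lemma~\ref{lem:existence}; its invertibility locus is isomorphic to an open of the projective blowup, hence quasi-projective. That locus contains $W$ and every codimension-one point because the local rings there are discrete valuation rings, so maximality forces it to equal $W$. The twisting by $nD$ is a harmless detour that you correctly recognize is not needed.
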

\begin{proof}
For such an open $W$, take an admissible base ideal $\cI$ from
Lemma~\ref{lem:existence}. Its invertibility locus is isomorphic to
an open of the projective variety $\Bl_{\cI}Y$, so is
quasi-projective. It contains $W$ and every codimension-one point,
since the local ring at such a point is a discrete valuation ring.
Maximality identifies this locus with $W$.
\end{proof}

\subsection{Numerical summation}
Summing base ideals removes the choice of a linear system, provided
projectivity survives the sum. The following lemma supplies the
needed condition: agreement of degrees along the entire boundary.

For a proper integral curve $C$ and a line bundle $L$, write
$L\cdot C=\deg(L|_C)$. In intersection calculations we write line
bundles additively; thus $L-D$ means $L\otimes\cO(-D)$.
We write $c_1(L)$ for the first Chern class of $L$.

\begin{lemma}\label{lem:sum}
Let $F$ be an algebraically closed field of characteristic zero,
$Y$ a smooth proper $F$-variety, and $B\subsetneq Y$ a closed subset.
Let $\{(L_i,H_i)\}_{i\in I}$ be a nonempty family of admissible
systems over $Y\setminus B$, with base ideals $\cI_i$.
Suppose
\[
 L_i\cdot C=L_j\cdot C
 \quad\text{for every integral curve }C\subset B
 \text{ and all }i,j.
\]
Then the coherent ideal $\cJ=\sum_{i\in I}\cI_i$ has projective
ordinary blowup. The assertion also holds componentwise on a
finite disjoint union of smooth proper varieties.
\end{lemma}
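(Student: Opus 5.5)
The plan is to exhibit an ample line bundle on $Q=\Bl_{\cJ}Y$ directly. I would \emph{not} argue through a common dominating model: the image of a projective scheme under a proper birational morphism need not be projective.

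\textbf{Reductions.} By Noetherianness the ascending chain of partial sums of the $\cI_i$ stabilizes. So $\cJ=\cI_1+\cdots+\cI_n$ for finitely many indices, and I work with these only. Write $q:Q\to Y$ for the blowup and $\cJ\cO_Q=\cO_Q(-F)$ with $F$ an effective Cartier divisor supported over $B$. Since $\cI_i\subset\cJ$, each $H_i$ gives sections of
\[
 N_i=q^*L_i-F .
\]
These sections need not generate $N_i$ everywhere. They do generate it on the open $Q_i\subset Q$ where $\cI_i\cO_Q=\cJ\cO_Q$. The $Q_i$ cover $Q$, because locally $\cJ$ is generated by finitely many elements, each lying in one $\cI_i$, and on the blowup one generator generates the pulled-back ideal near each point. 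The disjoint-union case is componentwise, so I treat a single $Y$.

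\textbf{Comparison with $P_i$.} On $Q_i$ the universal property of $P_i=\Bl_{\cI_i}Y$ gives a morphism $Q_i\to P_i$ over $Y$. Composed with the closed immersion $P_i\to\PP(H_i^\vee)$ of Definition~\ref{def:admissible}, this is the map defined by $H_i\subset H^0(Q,N_i)$ restricted to $Q_i$. Over $Y\setminus B$ everything is an isomorphism. It follows that any integral curve $C\subset Q$ lying over $B$ satisfies:
\begin{enumerate}
\item $N_1\cdot C=N_i\cdot C$ for every $i$, since $q(C)$ is a point or a curve in $B$ and the hypothesis applies;
\item taking $i$ with $C\cap Q_i\ne\emptyset$, this common degree is $\ge0$;
\item equality in (2) forces $C$ to be contracted by $Q_i\to P_i\subset\PP(H_i^\vee)$, hence to lie in a fibre of $q$ on which $\cJ$ and $\cI_i$ agree. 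The closed-immersion property of $P_i$ should then force $C$ to be a point, a contradiction.
\end{enumerate}
So $N:=N_1$ is strictly positive on all curves over $B$.

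\textbf{Building a projective embedding.} I would avoid Kleiman's criterion, since $Q$ is not yet known to be projective. Instead I would assemble a morphism to projective space from the $H_i$. Consider the line bundle $M=\sum_i N_i$ (additively) together with products of sections. On $Q_i$ use $s\cdot\prod_{j\ne i}t_j$, where $s\in H_i$ and $t_j$ are sections of $N_j$ nonvanishing at the given point. The aim is to show that some multiple $mM$ is globally generated and gives a finite morphism, which is injective over $Y\setminus B$ because $P_1\to\PP(H_1^\vee)$ is already an embedding there. Then $mM$ is ample and $Q$ is projective. The numerical hypothesis is what lets the $N_j$-factors be traded for $N_i$-factors along $B$ without changing degrees on curves over $B$.

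\textbf{Main obstacle.} I expect the hardest step to be the global generation of the cross terms $t_j$ near $F$. Off $Q_j$ the system $H_j$ vanishes along extra components over $B$, so sections of $N_j$ nonvanishing there are not directly available. The first fix I would try is to pass to $\cJ^n$, which has the same blowup $Q$. Its generators are products $\prod_k f_{i_k}$ with $f_{i_k}\in\cI_{i_k}$, which correspond exactly to products of sections of the various $L_{i_k}$. Then a Castelnuovo--Mumford type argument relative to the projective $P_i$ should show that these products separate points and tangents of fibres of $q$ over $B$, using only degree equalities along $B$.
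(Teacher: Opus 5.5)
Your curve-level analysis is essentially sound, but the step that actually yields projectivity is missing, and the mechanism you sketch for it cannot work as stated. The boundary hypothesis only equates degrees. It never identifies line bundles, so it cannot be used to trade $N_j$-factors for $N_i$-factors at the level of sections. A product $\prod_k f_{i_k}$ of generators of $\cJ^n$ is a section of $\sum_k q^*L_{i_k}-nF$. These bundles in general differ for different index multisets, so the generators of $\cJ^n$ do not form one linear system and define no morphism to projective space. Likewise, at points of $Q_i\setminus Q_j$ every section coming from $H_j$ vanishes, and nothing supplies other sections of $N_j$ there. Producing them is as hard as the semiampleness you are trying to prove. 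The Castelnuovo--Mumford suggestion names neither a vanishing theorem nor a single projective morphism to which it would apply. Moreover, you only establish positivity on curves. That does not imply ampleness even on projective surfaces (Mumford's example), so any proof must also control higher-dimensional subvarieties.

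The missing idea is the criterion you set aside. Nakai--Moishezon holds on every complete scheme \citep{Kleiman}; it is Kleiman's cone criterion that needs projectivity. The paper therefore only proves $N_i^{\dim Z}\cdot Z>0$ for every positive-dimensional integral $Z\subset Q$. It computes on the closure $\widetilde Q_i$ of the common dense open in $Q\times_Y P_i$, with projection $b_i:\widetilde Q_i\to P_i$. This variety is projective over $P_i$, hence projective. It is used only for intersection numbers, so your objection about images of projective schemes never arises. There the pullback of $N_i$ equals $b_i^*A_i$ plus an effective divisor disjoint from the preimage of $Q_i$, with $A_i$ very ample. It is also ample on fibres of $b_i$, because $-F$ is $q$-ample. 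Choose $i$ with the generic point of $Z$ in $Q_i$. On the strict transform of $Z$, successively replace $\dim b_i(Z')$ of the factors of $N_i$ by $b_i^*A_i$. This gives a chain of inequalities ending in a positive number. For $Z$ over $B$, the boundary hypothesis makes all $N_j$ numerically equal on that strict transform, so every index inherits the positivity. Otherwise the generic point of $Z$ lies in every $Q_j$, and each index can be used directly.

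A smaller point concerns your step (3). There the contradiction comes from $q$-ampleness of $-F$, which gives $N_i\cdot C=-F\cdot C>0$ for $C$ in a fibre of $q$. It does not come from the closed immersion into $\PP(H_i^\vee)$: the map $Q_i\to P_i$ can contract curves lying in fibres of $q$.
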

\begin{proof}
Noetherianness makes $\cJ$ a finite subsum, so assume $I$ is finite.
Put $\pi:T=\Bl_{\cJ}Y\to Y$ and
$\cJ\cO_T=\cO_T(-D)$. For each $i$ set
\[
 M_i=\pi^*L_i-D,\qquad
 R_i=\cI_i\cO_T\otimes\cO_T(D).
\]
The ideals $R_i$ sum to $\cO_T$. Their unit loci $\Omega_i$ therefore
cover $T$, and all contain $\pi^{-1}(Y\setminus B)$.

Write $q_i:P_i=\Bl_{\cI_i}Y\to Y$ and
$A_i=q_i^*L_i\otimes\cI_i\cO_{P_i}$; $A_i$ is very ample.
Let $Q_i$ be the reduced closure of the common dense open in
$T\times_Y P_i$, with projections
\[
\begin{CD}
 Q_i @>{b_i}>> P_i\\
 @V{a_i}VV @VV{q_i}V\\
 T @>{\pi}>> Y.
\end{CD}
\]
The variety $Q_i$ is projective, since $b_i$ is projective and
$P_i$ is projective. This supplies a projective variety for intersection
calculations before projectivity of $T$ is known.
The morphism $a_i$ is an isomorphism over
$\Omega_i$. Pulling back $\cI_i\subset\cJ$ gives an effective Cartier
divisor $\Delta_i$ on $Q_i$, disjoint from $a_i^{-1}(\Omega_i)$, such that
\begin{equation}\label{eq:comparison}
 a_i^*M_i=b_i^*A_i+\Delta_i.
\end{equation}

We first show that every $M_i$ is nef, meaning that it has
nonnegative degree on every integral curve. If $C\subset T$ is
contracted by $\pi$, then $M_i\cdot C>0$ because $-D$ is
$\pi$-ample. Otherwise choose $i$ with the generic point of $C$
in $\Omega_i$. Its strict transform in $Q_i$ is not contracted by
$b_i$, so \eqref{eq:comparison} gives $M_i\cdot C>0$.
If $\pi(C)\subset B$, the hypothesis and the projection formula
give the same degree for every $M_j$. If $\pi(C)\not\subset B$,
the generic point of $C$ belongs to every $\Omega_j$, and the same
argument applies to each index. Thus every $M_j$ has positive
degree on every curve.

It remains to check higher-dimensional subvarieties. Let
$Z\subset T$ be integral of dimension $d>0$, choose $i$ with
its generic point in $\Omega_i$, and let $Z'\subset Q_i$ be its strict
transform. On $Z'$ write
\[
 M'=a_i^*M_i|_{Z'},\quad N'=b_i^*A_i|_{Z'},\quad
 G=\Delta_i|_{Z'},\quad e=\dim b_i(Z').
\]
Here $M',N'$ are nef and $G$ is effective Cartier, with $M'=N'+G$.
For a point $b\in P_i$ above $y\in Y$, the fiber of $b_i$
is a closed subscheme of
$T_y\times_{\Spec\kappa(y)}\Spec\kappa(b)$, where $\kappa(-)$
denotes the residue field. On that fiber $a_i^*M_i$ is the restriction
of the ample bundle $-D$, up to a trivial factor.
In particular $M'$ is ample on the generic fiber of
$Z'\to b_i(Z')$.

The projection formula now gives
\[
 (M')^{d-e}(N')^e\cdot Z'>0.
\]
Indeed the pushforward of
$c_1(M')^{d-e}\cap[Z']$ is a positive multiple of $[b_i(Z')]$:
its coefficient is the positive top degree on the generic
fiber, or the degree of the generically finite map if $d=e$.
Intersecting with the ample class $c_1(A_i)^e$ gives the
displayed positivity. This also covers $e=0$.
Effectivity of $G$ and nefness on the projective variety $Z'$
therefore imply
\begin{equation}\label{eq:mixed}
 (M')^d\cdot Z'\ \ge\ (M')^{d-1}N'\cdot Z'\ \ge\ \cdots\
 \ge\ (M')^{d-e}(N')^e\cdot Z'\ >0.
\end{equation}
Since $a_i:Z'\to Z$ is birational, this is $M_i^d\cdot Z>0$
\citep[Tag~0BET]{Stacks}.
If $\pi(Z)\subset B$, the boundary-degree hypothesis makes
$a_i^*M_j|_{Z'}$ numerically equivalent to $M'$ for every $j$.
Their top intersections agree on the projective $Z'$, and the
projection formula gives $M_j^d\cdot Z=M_i^d\cdot Z>0$.
Otherwise the generic point of $Z$ belongs to every $\Omega_j$, and each index
can be used from the start.

Nakai--Moishezon on complete schemes makes every $M_j$ ample
\citep{Kleiman}; see also \citet[Theorem~1.3]{FujinoMiyamoto}.
Hence $T$ is projective.
The proof applies separately to the disjoint integral components.
\end{proof}

\section{Finite preparation in every dimension}\label{sec:preparation}

The construction below repeatedly principalizes boundary ideals.
Its termination is measured at a fixed finite set of codimension-two
points of the original $X$, so all boundary images are taken in $X$.

\begin{proposition}\label{prop:preparation}
For every smooth proper $k$-variety $X$, there is a finite natural
sequence of smooth blowups $f:Y\to X$, preserving $U(X)$, such
that either $Y$ is projective or some maximal quasi-projective open
$W\subset Y$ contains $f^{-1}U(X)$ and every component of
$Y\setminus W$ has image of codimension at least three in $X$.
The sequence is empty if $X$ is projective.
\end{proposition}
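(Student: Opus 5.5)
We construct the sequence by iteration, as the introduction indicates. Put $Y_0=X$. Given a stage $f_n:Y_n\to X$ that is an isomorphism over $U(X)$, let $\cM_n$ be the set of maximal quasi-projective opens of $Y_n$ containing $f_n^{-1}U(X)$. This set is finite by \citet[Theorem~9]{Benoist}. It is nonempty because $f_n^{-1}U(X)\cong U(X)$ is quasi-projective and is contained in some maximal one by Noetherianness. For each $W\in\cM_n$ let $\cI_W$ be the reduced ideal of $Y_n\setminus W$, and put $\cK_n=\prod_{W\in\cM_n}\cI_W$. The iteration stops in two cases: when $Y_n$ is projective, or when some $W\in\cM_n$ has every boundary component mapping into codimension at least three in $X$. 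Otherwise we principalize $\cK_n$ with the fixed procedure to get $Y_{n+1}$.

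Several properties follow at once. The centers avoid the unit locus of $\cK_n$, which contains $f_n^{-1}U(X)$, so $U(X)$ is preserved. Naturality holds because $\cM_n$, and hence $\cK_n$, is intrinsic to the stage and $U(X)$. Every semilinear isomorphism therefore carries $\cK_n$ to its counterpart, and the principalization procedure is compatible with such isomorphisms. The stopping test is also intrinsic. If $X$ is projective, we stop at $n=0$.

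Termination is the main point. Fix once and for all, for the proof only, an admissible system over $U(X)$ from Lemma~\ref{lem:existence}, with base ideal $\cJ$ on $X$. Let $S$ be the finite set of generic points of the codimension-two part of $\Supp(\cO_X/\cJ)$, with divisorial factors removed. Choose $x\in S$. The local ring $\cO_{X,x}$ is a regular local surface, and over $\Spec\cO_{X,x}$ every smooth blowup with center of codimension at least two meeting the closed fiber is a point blowup. The localized tower is thus a tower of point blowups of regular surfaces. On each stage $Y_n$ we consider the transform of $\cJ$ with its invertible (exceptional divisorial) part removed. At the points over $x$ it has finite colength, and we sum these colengths over all $x\in S$ to get an invariant $c_n\ge0$.

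The key claim is that $c_{n+1}<c_n$ whenever step $n$ is nonterminal. First, the pullback of $\cJ$ to $Y_n$, made invertible there, gives a quasi-projective invertibility locus, as in Lemma~\ref{lem:boundary}. This locus contains $f_n^{-1}U(X)$, so it lies in some $W\in\cM_n$. Next, suppose some $W$ has a boundary component mapping onto a codimension-two point. That point must lie in $S$: otherwise $\cJ$ becomes invertible near the generic point of the component, contradicting the maximality of $W$. If no $W$ has such a component, every $W$ passes the stopping test; when $\cM_n$ contains a single open this is immediate. Hence at a nonterminal step some point $x\in S$ has a boundary point over it where the residual transform of $\cJ$ is not invertible. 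Principalizing $\cK_n$ blows up, locally over $x$, every point of the boundary, including this base point of $\cJ$. A point blowup at a base point of an ideal of finite colength on a regular surface strictly drops the colength of the residual ideal, by the standard formula $\length$ decreasing by $\binom{m+1}{2}$ for order $m\ge1$. Blowups elsewhere do not increase $c_n$.

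We then conclude. Once $c_n=0$, the transform of $\cJ$ is invertible over the codimension-two points of $X$. Any maximal open containing the invertibility locus then has all boundary components mapping into codimension at least three. By Lemma~\ref{lem:boundary} no boundary component maps onto a codimension-one point. So the iteration stops after at most $c_0$ steps. The delicate step is the comparison between boundary components lying over $x$ and base points of the residual ideal. It uses that the invertibility locus of the transformed $\cJ$ is quasi-projective and contained in a member of $\cM_n$, so that points where the residual ideal is invertible cannot account for every boundary point over $S$ at a nonterminal step. I expect this accounting, together with handling several opens in $\cM_n$ at once, to be the main obstacle.
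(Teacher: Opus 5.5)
Your iteration, your naturality argument and your colength invariant $c_n$ are the paper's. The gap is in the step that should produce a residual base point at a nonterminal round. You argue that if some $W\in\cM_n$ has a boundary component over a codimension-two point, that point lies in $S$, since otherwise $\cJ$ would be invertible near the generic point of the component, ``contradicting the maximality of $W$.'' Maximality gives no such contradiction. The invertibility locus $O$ of $\cJ\cO_{Y_n}$ is quasi-projective, but $W\cup O$ need not be.

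So the boundary of an arbitrary member of $\cM_n$ can meet $O$. It can even lie over codimension-two points outside $\Supp(\cO_X/\cJ)$. This already happens at $n=0$ whenever $X\setminus U(X)$ has a codimension-two component $S'$ not contained in that support: the member containing $O$ contains the generic point of $S'$, so some other member misses it. Blowups over such points, or over boundary points lying in $O$, leave $c_n$ unchanged. This is the accounting over several opens that you leave unresolved, so the strict decrease $c_{n+1}<c_n$ is not yet proved.

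The repair uses an ingredient you already have, and it is the paper's argument. The stopping test asks for a single good open, so at a nonterminal round the particular member $W_{\cJ}\supset O$ fails it. Take a boundary component $C$ of $W_{\cJ}$ whose image $S'$ has codimension at most two. Lemma~\ref{lem:boundary} and $\dim S'\le\dim C\le\dim X-2$ force codimension exactly two, with $C$ generically finite over $S'$.

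Since $C\cap O=\emptyset$ by inclusion, not by maximality, $\cJ\cO_{Y_n}$ is not invertible at the generic point $p$ of $C$. Hence the generic point of $S'$ lies in $S$, and $p$ is a closed point of the localized surface where the residual ideal is a nonunit.

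You should also justify that principalization actually blows up $p$. By the same dimension count, each reduced complement in $\cK_n$ localizes at $p$ to the unit ideal or to $\mathfrak m_p$, and the factor from $W_{\cJ}$ gives $\mathfrak m_p$. Thus $(\cK_n)_p=\mathfrak m_p^a$ with $a>0$ is not invertible. The first center meeting the fiber over $p$ is therefore $p$ itself, and at that moment the residual ideal is still unchanged there.

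Your final paragraph already contains this mechanism in contrapositive form; it has to be applied at every round, to $W_{\cJ}$. Finally, the colength does not always drop by exactly $\binom{m+1}{2}$: for $(x^2,y^2)$ it drops from $4$ to $0$. Only the strict inequality of Stacks Tag~0AGT is needed.
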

\begin{proof}
Set $V=U(X)$. At the beginning of a round, let
$f_i:Y_i\to X$ be the sequence already constructed and put
$V_i=f_i^{-1}V$. Let $\cG_i$ consist of all maximal
quasi-projective opens of $Y_i$ containing $V_i$.
It is finite by Benoist's theorem and nonempty since
$V_i\simeq V$ is quasi-projective.
Stop if $Y_i$ is projective or if some $W\in\cG_i$ satisfies
the required image condition. Otherwise principalize
\begin{equation}\label{eq:preparation-ideal}
 \cJ_i=\prod_{W\in\cG_i}\cI_{(Y_i\setminus W)_{\mathrm{red}}},
\end{equation}
where $\cI_Z$ denotes the defining ideal of a closed subscheme
$Z$. The product uses all members of $\cG_i$, so no open is chosen.
Append the whole principalization sequence and begin the next round.
All these ideals are nonzero and unit on $V_i$.

We prove termination using an auxiliary ideal that is not part of
the rule. By Lemma~\ref{lem:existence}, there is a nonzero ideal
$\cI$ unit on $V$ with projective blowup $P=\Bl_{\cI}X$.
By Lemma~\ref{lem:boundary}, its support
$\Supp(\cO_X/\cI)$ has no divisorial component.
Let $S$ range over its finitely many codimension-two components,
and let $\eta_S$ be the generic point of $S$.
The ring $R_S=\cO_{X,\eta_S}$ is regular local of dimension two.

The localized sequence
$T_{i,S}=Y_i\times_X\Spec R_S$ consists, after identity steps
are omitted, of point blowups on a regular surface. The centers
avoid $V_i$, and $V$ contains every codimension-one point of $X$.
On $\Spec R_S$ their images therefore lie over its closed point
$\eta_S$. A center component meeting $T_{i,S}$ must dominate $S$; its
dimension is at most $\dim X-2=\dim S$, so its generic fiber
over $S$ is a finite reduced set of closed points. Divide
$\cI\cO_{T_{i,S}}$ by its full divisorial factor. The residual
ideal has finite colength at finitely many points. Here the
\emph{colength} of an ideal $J$ in a local ring $R$ is the module
length $\length_R(R/J)$, the number of factors in a composition series
when this length is finite. Let $\delta_i$ be the sum of these local
colengths, over all $S$. It is a nonnegative integer.

The local inequality is as follows. Let $J$ be a nonunit ideal
of finite colength in a regular local ring $R$ of dimension two.
On the blowup of its closed point, let $J'_q$ be the residual
ideals after removing the exceptional factor, and let $R'_q$ be
the local rings at their support points. The surface inequality
of \citet[Tag~0AGT]{Stacks} gives
\[
 \sum_q\length_{R'_q}(R'_q/J'_q)<\length_R(R/J).
\]
Applied to the residual ideals above,
this says that blowing up a residual base point strictly decreases
the total colength. Blowing up elsewhere does not increase it,
and previously invertible factors simply pull back.

Suppose a round is nonterminal. The invertibility locus $O$
of $\cI\cO_{Y_i}$ contains $V_i$. It is quasi-projective:
the blowup of $\cI\cO_{Y_i}$ is an isomorphism over $O$
and maps projectively to $P$, by its closed graph in the
fiber product over $X$. Enlarge $O$ to a maximal
quasi-projective $W_{\cI}\in\cG_i$.
Failure of the stopping condition gives a component
$C$ of $Y_i\setminus W_{\cI}$ with codimension-two image $S$.
There can be no codimension-one image by
Lemma~\ref{lem:boundary}. The image is contained in the support
of $\cO_X/\cI$, so this $S$ belongs to the fixed finite list.
Lemma~\ref{lem:boundary} on $Y_i$ gives
$\dim S\le\dim C\le\dim X-2=\dim S$. Thus the generic
point of $C$ becomes a closed point $p$ of $T_{i,S}$ where
$\cI$ is not invertible.

Each reduced complement occurring in
\eqref{eq:preparation-ideal} is either absent or a reduced
zero-dimensional subscheme after localization at $p$. Its
ideal at $p$ is therefore either $\cO_{T_{i,S},p}$ or
the maximal ideal $\mathfrak m_p$. The factor from
$W_{\cI}$ is the latter. Thus
$(\cJ_i)_p=\mathfrak m_p^a$ for some $a>0$.
Since $\mathfrak m_p^a$ is not invertible, principalization must
eventually have a nonidentity step whose center meets the fiber over $p$. Before the first such step, all centers are
disjoint from that fiber, so the localized surface and residual ideal
are unchanged near $p$. That first step therefore blows up $p$ while
the residual ideal of $\cI$ is still nonunit, and strictly lowers the
total colength. The other steps cannot increase the total colength, so
$\delta_{i+1}<\delta_i$.
A nonnegative integer cannot decrease through infinitely many
rounds, proving termination.

The ideals and stopping conditions are invariant under
isomorphisms of the input, so the fixed principalization
procedure makes the entire sequence natural. Its centers
avoid the inverse image of $V$ throughout.
\end{proof}

\section{Numerical selection and descent}\label{sec:completion}

Lemma~\ref{lem:sum} removes the choice of a system within one
boundary numerical class. We must now specify finitely many such
classes without choosing a polarization. Assume that the prepared
boundary images have dimension at most one. The prepared sequence
provides a relatively ample bundle; the point-or-curve image
condition gives ample numerical classes on the boundary normalizations.

\subsection{An ample class from the sequence}
Let $f:Y\to X$ be the prepared sequence and
$V=U(X)$. If $Y$ is projective, no further step is needed.
Write $E_1,\ldots,E_s$ for the total transforms in $Y$ of the
exceptional divisors of its successive nonidentity blowups, in
that order. Some positive integer tuple $\mathbf a=(a_1,\ldots,a_s)$
makes
\[
 A=\cO_Y\Bigl(-\sum_{i=1}^s a_iE_i\Bigr)
\]
$f$-ample, by composition of relative ampleness
\citep[Tag~0C4K]{Stacks}. Use the lexicographically least such tuple:
it exists by successively minimizing the finitely many coordinates.
An isomorphism preserves the ordered divisors and relative ampleness,
so it preserves this tuple and $A$. For an empty sequence take
$A=\cO_Y$, which is ample relative to the identity.

We pass to an algebraic closure to use numerical divisor groups
and the degree-one class on a curve; over $k$, a curve need not
have a divisor of degree one. Let $F$ be an algebraic closure of
$k$ and write
$Y_F=Y\times_k F$, $X_F=X\times_k F$, and $f_F:Y_F\to X_F$.
The scheme $Y_F$ is a finite disjoint union of smooth integral
varieties. Put $V_F=f_F^{-1}(V\times_k F)$, and let $A_F$ be the
pullback of $A$ to $Y_F$.
Let $\cG$ consist of all maximal quasi-projective opens
$W\subset Y_F$ containing $V_F$ for which every boundary
component has image of dimension at most one in $X_F$.
If the prepared boundary images over $k$ have dimension at most
one, then $\cG$ is nonempty: extend the base change of an
open supplied by Proposition~\ref{prop:preparation} to a maximal
open. Its complement only shrinks.
The set $\cG$ is finite by Benoist's theorem.

Fix $W\in\cG$ and put $B=(Y_F\setminus W)_{\mathrm{red}}$.
The boundary classes and bounds below are constructed for this
$W$; their dependence on $W$ is suppressed until the final product.
For each positive-dimensional irreducible component $B_j$ of
$B$, let $\nu_j:Z_j\to B_j$ be its normalization and let $D_j$
be its reduced image in $X_F$. Both $B_j$ and $Z_j$ are
projective: $B_j\to D_j$ is projective, normalization is finite,
and a proper integral curve or point over $F$ is projective.
We use $A_F$ also for its pullbacks to these normalizations.

For a projective variety $Z$, let $N^1(Z)_{\ZZ}$ denote the
group of line bundles modulo equality of degrees on all
integral curves, and set
$N^1(Z)_{\RR}=N^1(Z)_{\ZZ}\otimes_{\ZZ}\RR$.
The integral group is a finite-rank lattice: numerical equivalence
is coarser than algebraic equivalence, and the numerical quotient
is torsion-free, so the theorem of the base applies
\citep[Theorem~7.4]{ChengEtAl}.
Ampleness depends only on numerical class.

If $D_j$ is a point, set $h_j=[A_F|_{Z_j}]$; this is ample
since $B_j$ lies in a fiber of $f_F$ and $A_F$ is $f_F$-ample.
If $D_j$ is a curve, normality gives a morphism
$g_j:Z_j\to D_j^\nu$ to its normalization. The smooth
projective curve $D_j^\nu$ has a distinguished numerical
class $\kappa_j$ of degree one. Put
\begin{equation}\label{eq:boundary-class}
 h_j=[A_F|_{Z_j}]+n_jg_j^*\kappa_j,
\end{equation}
where $n_j$ is the least nonnegative integer making this class
ample. Such an integer exists: the restriction of $A_F$ is
$g_j$-ample, and relative ample twisting applies
\citep[Tags~035Q, 0C4L and 0892]{Stacks}.
Only the numerical class of degree one enters the definition.
All the classes $h_j$ are ample and are preserved, with
their indices, by isomorphisms of the data.

\subsection{Finite numerical bounds}
Define an equivalence relation on $\Pic(Y_F)$ by
$L\sim_B L'$ if $L\cdot C=L'\cdot C$ for every integral
curve $C\subset B$, and let
\[
 \Gamma_B=\Pic(Y_F)/{\sim_B}.
\]
Write $[L]_B$ for the class of $L$ in this quotient.
Restriction induces an injection
\begin{equation}\label{eq:lattice}
 \Gamma_B\ \lhook\joinrel\longrightarrow\
          \bigoplus_j N^1(Z_j)_{\ZZ}.
\end{equation}
Indeed a curve in $B$ lifts to a curve of positive finite
degree on one of its normalizations, and a curve in a
normalization maps finitely to a boundary curve. The degree
formula identifies the two kernels. Thus $\Gamma_B$ is a
finite-rank lattice in the real span of its image.
Zero-dimensional boundary components impose no conditions.

For each integer $N\ge0$ define
\begin{equation}\label{eq:bounds}
 K_B(N)=\{\gamma\in\Gamma_B:
 Nh_j+\gamma|_{Z_j},\ Nh_j-\gamma|_{Z_j}
            \text{ are nef for every }j\}.
\end{equation}
Here nef means nonnegative on all integral curves of $Z_j$;
write $\Nef(Z_j)$ for the cone of these classes in $N^1(Z_j)_{\RR}$.

\begin{lemma}\label{lem:bounds}
The sets $K_B(N)$ are finite, nested, and exhaust $\Gamma_B$.
Their definition is invariant under isomorphisms, including
semilinear isomorphisms of the geometric data.
\end{lemma}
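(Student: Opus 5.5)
We treat the three assertions of Lemma~\ref{lem:bounds} separately, working inside the finite-dimensional real vector space $\bigoplus_j N^1(Z_j)_{\RR}$. Via the injection \eqref{eq:lattice}, $\Gamma_B$ is a discrete lattice there.

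\emph{Nestedness.} Each $h_j$ is ample, hence nef. If $N\le N'$, then
\[
N'h_j\pm\gamma|_{Z_j}=(N'-N)h_j+(Nh_j\pm\gamma|_{Z_j})
\]
is a sum of nef classes, hence nef. So $K_B(N)\subset K_B(N')$.

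\emph{Exhaustion.} Fix $\gamma$. Since $h_j$ is ample, it lies in the interior of $\Nef(Z_j)$; this uses Kleiman's criterion, i.e.\ that the ample cone is the interior of the nef cone on the projective $Z_j$. Hence $h_j\pm\epsilon\,\gamma|_{Z_j}$ is nef for some small $\epsilon>0$, and this holds for all $j$ at once because there are finitely many $j$. Taking $N\ge 1/\epsilon$ gives $\gamma\in K_B(N)$. Components $Z_j$ lying over points need no special treatment: their $h_j$ are still ample classes.

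\emph{Finiteness.} This is the main step. Note that $\Nef(Z_j)$ is a closed convex cone containing no line. Indeed, if $\pm c$ are both nef, then pairing with curves gives $c\cdot C=0$ for all curves $C$, so $c=0$ in $N^1(Z_j)_{\RR}$; here we use that $N^1(Z_j)_{\RR}$ is, by definition, the quotient by numerical equivalence.

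The condition defining $K_B(N)$ says that $\gamma|_{Z_j}$ lies in the set
\[
(-Nh_j+\Nef)\cap(Nh_j-\Nef).
\]
This set is compact. To see this, suppose it contained a sequence $c_n$ with $\|c_n\|\to\infty$. After normalizing, a limit point $c$ of $c_n/\|c_n\|$ satisfies $c\in\Nef$ and $-c\in\Nef$, since the cones are closed. Hence $c=0$, contradicting $\|c\|=1$.

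So the image of $K_B(N)$ under \eqref{eq:lattice} lies in a compact subset of $\bigoplus_j N^1(Z_j)_{\RR}$. Because $N^1(Z_j)_{\ZZ}$ is a lattice, as cited from the theorem of the base, this compact set contains only finitely many lattice points. Injectivity of \eqref{eq:lattice} then gives finiteness of $K_B(N)$. The one point needing care is discreteness of $N^1(Z_j)_{\ZZ}$ in $N^1(Z_j)_{\RR}$, which holds because it is a finitely generated torsion-free group tensored up.

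\emph{Invariance.} Let $\phi$ be an isomorphism of the geometric data, possibly semilinear over an automorphism of $F$. It carries $W$ and $B$ to their counterparts, and integral curves to integral curves with the same degrees, since degree is invariant under base change along field automorphisms. It therefore induces compatible isomorphisms of $\Gamma_B$, of the normalizations $Z_j$ with their indices, and of the groups $N^1$, and it preserves nef cones. It also carries $A_F$ to $A_F$, since the tuple $\mathbf a$ is preserved, and $\kappa_j$ to $\kappa_j$, since this is the unique degree-one class. The least integers $n_j$ are defined by ampleness, which is preserved, so they agree. Hence each $h_j$ maps to its counterpart, and every ingredient of \eqref{eq:bounds} is transported. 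Thus $\phi$ maps $K_B(N)$ onto the corresponding set.
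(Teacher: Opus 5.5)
Your proposal is correct and follows essentially the same route as the paper. You get nestedness by adding the nef class $(N'-N)h_j$, finiteness from compactness of $\{u : Nh_j\pm u\in\Nef(Z_j)\}$ (via pointedness of the closed nef cone) together with discreteness of the restriction lattice, exhaustion from ampleness of $h_j$ (the paper phrases this as openness of the ample cone), and invariance by transport of structure; you simply spell out details the paper leaves implicit, such as why the nef cone is pointed and why $A_F$, $\kappa_j$ and $n_j$ are preserved.
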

\begin{proof}
In $N^1(Z_j)_{\RR}$ the nef cone is closed and pointed,
and $h_j$ lies in its interior. The closed set of classes
$u$ satisfying $Nh_j\pm u\in\Nef(Z_j)$ is bounded.
Otherwise a normalized unbounded sequence would have a
nonzero limit for which both $u$ and $-u$ are nef,
contradicting pointedness. It is therefore compact.
The product of these compact sets meets the lattice
\eqref{eq:lattice} in finitely many points.
For any fixed class $\gamma$, openness of the ample cone
gives $Nh_j\pm\gamma|_{Z_j}$ ample for all sufficiently
large $N$. This proves exhaustion. Nestedness follows
by adding the nef class $h_j$. All defining conditions
are intrinsic to the indicated data.
\end{proof}

\subsection{The ideal and its descent}
For each $W\in\cG$, again put $B=(Y_F\setminus W)_{\mathrm{red}}$.
Let $N_W$ be the least $N$ for which
$K_B(N)$ contains the class of a line bundle occurring in
an admissible system over $W$. It exists by
Lemma~\ref{lem:existence} and Lemma~\ref{lem:bounds}.
Let $S_W$ be the nonempty finite set of all classes in
$K_B(N_W)$ that occur. For $\gamma\in S_W$, define
\[
 \cJ_{W,\gamma}=
 \sum_{\substack{(L,H)\text{ admissible over }W\\
                 [L]_B=\gamma}}\cI(L,H).
\]
Noetherianness makes this a coherent ideal and a finite
subsum. Lemma~\ref{lem:sum} shows that its ordinary blowup
is projective. Define the finite product
\begin{equation}\label{eq:final-ideal}
 \cK_F=\prod_{W\in\cG}\ \prod_{\gamma\in S_W}\cJ_{W,\gamma}.
\end{equation}
It is unit on $V_F$ and nonzero on each component of $Y_F$.

\begin{lemma}\label{lem:descent}
The ideal $\cK_F$ descends uniquely to a coherent ideal
$\cK\subset\cO_Y$. Principalizing $\cK$ over $k$ gives a
smooth variety projective over $k$, and all its centers
avoid $f^{-1}V$.
\end{lemma}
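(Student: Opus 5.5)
The plan has three steps: descend $\cK_F$ by Galois descent, show that principalizing it over $k$ gives a projective variety, and check that the centers avoid $f^{-1}V$.

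\emph{Descent.} Since $\operatorname{char}k=0$, $F/k$ is Galois. The ideal sheaves on $Y$ are exactly the $\Gal(F/k)$-stable ideal sheaves on $Y_F$, by faithfully flat descent of quasi-coherent subsheaves of $\cO_{Y_F}=\cO_Y\otimes_kF$. Coherence is automatic since $\cK_F$ is finitely generated over a finite Galois subextension. For uniqueness, note that $Y_F\to Y$ is faithfully flat, so an ideal on $Y$ is determined by its pullback.

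For stability, let $\sigma\in\Gal(F/k)$ act semilinearly on $Y_F$ over $\Spec\sigma$. It fixes $f_F$, $V_F$ and $A_F$, and it permutes the components of $Y_F$. It sends maximal quasi-projective opens to maximal quasi-projective opens, since quasi-projectivity over $F$ is preserved under semilinear automorphisms. It also preserves the dimension condition on boundary images, so $\sigma$ permutes $\cG$. For each $W$, $\sigma$ carries $B$ to the boundary of $\sigma(W)$, normalizations to normalizations, and $h_j$ to the corresponding class. The least integer $n_j$ is intrinsic, and the degree-one class is intrinsic. Hence, by Lemma~\ref{lem:bounds}, $\sigma$ carries $K_B(N)$ to $K_{\sigma B}(N)$. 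Pulling back along $\sigma$ also carries admissible systems over $W$ to admissible systems over $\sigma(W)$: the base ideal transports and the closed immersion transports. So $N_{\sigma W}=N_W$, $S_{\sigma W}=\sigma S_W$, and $\sigma^*\cJ_{\sigma W,\sigma\gamma}=\cJ_{W,\gamma}$. The product \eqref{eq:final-ideal} is therefore $\sigma$-stable.

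\emph{Projectivity.} Let $g:Y'\to Y$ be the principalization of $\cK$. Because the fixed procedure is compatible with the flat (semilinear) base change to $F$, or more simply because principalization commutes with base field extension in characteristic zero, $Y'_F\to Y_F$ principalizes $\cK_F$. Hence each factor $\cJ_{W,\gamma}\cO_{Y'_F}$ is invertible, since a factor of an invertible product on an integral scheme is invertible. By the universal property of blowups, $Y'_F\to Y_F$ factors through $\Bl_{\cJ_{W,\gamma}}Y_F$, which is projective by Lemma~\ref{lem:sum}. The factoring map is projective because $Y'_F\to Y_F$ is projective, being a composite of blowups. Thus $Y'_F$ is projective over $F$.

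Projectivity then descends: a proper $k$-scheme whose base change to a field extension is projective is projective. I will use the ample line bundle defined over a finite subextension and take its norm. Smoothness of $Y'$ comes from principalization.

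\emph{Centers.} The centers avoid the unit locus of $\cK$. This unit locus contains $f^{-1}V$ because $\cK_F$ is unit on $V_F$. Since the principalization centers avoid the unit locus of the total transform, every later center lies over the support of $\cO_Y/\cK$.

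The main obstacle is the Galois-stability argument. It must track carefully that every least-integer and intrinsic-class choice commutes with semilinear automorphisms, including the permutation of components of $Y_F$. A second delicate point is that principalization over $k$ base-changes to principalization over $F$. I will try first to appeal to the procedure's compatibility with smooth morphisms (here, field extension) in \citet{Wlodarczyk}. If that fails, I will argue directly: invertibility of $\cK\cO_{Y'}$ can be checked after faithfully flat base change, which suffices for the factorization.
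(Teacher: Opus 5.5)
Your proposal is correct and follows the paper's proof: you show Galois stability of the product \eqref{eq:final-ideal}, descend the invariant ideal, factor $Y'_F$ through a projective $\Bl_{\cJ_{W,\gamma}}Y_F$, and descend ampleness. For the descent you invoke Galois descent for $F/k$ directly, whereas the paper reduces to a finite Galois $E/k$ and uses a trace-dual basis. You do not need, and the paper does not assert, that principalization commutes with field extension. Your fallback is exactly the paper's argument: the flat pullback of the invertible ideal $\cK\cO_{Y'}$ to $Y'_F$ is $\cK_F\cO_{Y'_F}$, which therefore remains invertible.
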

\begin{proof}
Each $\sigma\in\Gal(F/k)$ preserves $V_F$ and $A_F$ and
permutes $\cG$. It transports boundary normalizations, the classes
$h_j$, the bounds $K_B(N)$ and admissible systems to the corresponding
data for $\sigma(W)$. Hence $N_{\sigma(W)}=N_W$ and
$S_{\sigma(W)}=\sigma(S_W)$, and it carries $\cJ_{W,\gamma}$
to $\cJ_{\sigma(W),\sigma(\gamma)}$.
Thus $\sigma$ permutes the factors of \eqref{eq:final-ideal}:
$\cK_F$ is an invariant ideal, not merely an invariant
isomorphism class of sheaves.

Descend its finite presentation to a
finite Galois extension $E/k$ inside $F$, obtaining
$\cK_E$. Invariance descends by faithful flatness.
On an affine open $\Spec R\subset Y$, choose a $k$-basis
$e_1,\ldots,e_t$ of $E$ and its trace-dual basis
$d_1,\ldots,d_t$. For any element
$u=\sum_\ell r_\ell\otimes e_\ell$ of the ideal $\cK_E$,
\[
 \sum_{\tau\in\Gal(E/k)}\tau(d_j u)=r_j\otimes1.
\]
The right side therefore belongs to $\cK_E$.
It follows that $\cK_E$ is the extension of its contraction
to $R$. These contractions are coherent and glue.
Faithful flatness also proves uniqueness, and hence
independence of the algebraic closure used in the construction.

Principalize $\cK$ over $k$, with endpoint $Y'\to Y$, and put
$Y'_F=Y'\times_k F$.
After base change to $F$, the pullback of
\eqref{eq:final-ideal} to $Y'_F$ is invertible. In a local domain,
if a product of nonzero ideals is invertible, each factor is invertible: after
dividing the product of the other factors by a generator
of the product, one obtains an inverse fractional ideal.
Since $Y'_F$ is smooth, this applies on every component.
Thus $Y'_F$ maps to each
$\Bl_{\cJ_{W,\gamma}}Y_F$.
The map is projective, by its closed graph in the fiber
product over $Y_F$, because $Y'_F\to Y_F$ is projective.
Its target is projective, so $Y'_F$ is projective.
Existence of an ample line bundle descends from $F$ to $k$
\citep[Tags~0BDC and 0BDE]{Stacks}; properness of $Y'$
then gives projectivity over $k$.
Finally, $\cK$ is unit on $f^{-1}V$, so every
principalization center avoids that open.
\end{proof}

\begin{proof}[Proof of Theorem~\ref{thm:main}]
Run Proposition~\ref{prop:preparation}. If the endpoint is
projective, stop. Otherwise the relevant boundary images
have codimension at least three in $X$. Since $\dim X\le4$,
they have dimension at most one, so the construction of
Section~\ref{sec:completion} applies. Lemma~\ref{lem:descent}
gives the projective endpoint over $k$.

The preparation, the numerical selection and the final
principalization are finite. All centers avoid the inverse image of the original
$U(X)$. Since the ideals are generically nonzero, no center
contains an ambient component. After omitting the identity blowups
as agreed above, all centers have codimension at least two.

An isomorphism of inputs transports the preparation, its
ordered exceptional divisors, the finite numerical sets,
and the actual descended ideal. Functorial principalization
then transports every stage. The lifts respect identities and
composition by uniqueness on
the dense unchanged open. For an isomorphism over $k\simeq k'$,
extend the field isomorphism to algebraic closures; invariance and
uniqueness of descent in Lemma~\ref{lem:descent} make the descended
ideal independent of this extension.
The initial stopping condition makes the sequence empty
on projective inputs.
\end{proof}

\begin{remark}
The passage to $F$ is used only to select an ideal, which is
then descended and principalized over $k$. The construction
keeps the pullback of $U(X)$ throughout; it does not
replace it by the core computed over $F$.
In dimension at least five, preparation still terminates,
but boundary images may be surfaces. The degree-one
numerical class used in \eqref{eq:boundary-class} then has
no corresponding construction here. The present argument
does not give projectivization in that dimension range.
\end{remark}

\section*{Acknowledgements}
The author received no funding for this work and declares no conflicts
of interest.

\section*{AI Declaration}
AI tools were used extensively in the development and preparation of this
work. OpenAI Astra (\texttt{gpt-6-astra}, accessed through Codex) was used to
explore proof strategies, develop and revise mathematical arguments,
search and compare the literature, draft the manuscript and its \LaTeX{}
source, and check the arguments and exposition. Anthropic Claude
(Opus~5.5) was used to discuss candidate constructions and to assess
proof arguments, literature comparisons and successive manuscript drafts.
Codex also assisted with incorporating the author's revisions and preparing the submission files. Claude's Haiku~4.5 was also used internally by web tools during journal
selection. The author directed the work and takes responsibility for the
mathematical content, citations and final presentation.

\section*{Data Access Statement}
The results are established by the proofs in this article and the cited
literature. No external datasets or computational certificates are required
to verify them.

\end{document}